\def\C{\mathbb C}
\def\R{\mathbb R}
\def\N{\mathbb N}
\def\re{\operatorname{Re}}
\def\im{\operatorname{Im}}
\newtheorem{thm}{Theorem}[section]
\newtheorem{lem}{Lemma}[section]
\begin{document}
%\sffamily
\title{The escaping set of a quasiregular mapping}
\author{Walter Bergweiler}
\address{Mathematisches Seminar
\\Christian--Albrechts--Universit\"at zu Kiel
\\Ludewig--Meyn--Str.\ 4, D--24098 Kiel, Germany}
\email{bergweiler@math.uni-kiel.de}
\author{Alastair Fletcher}
\address{School of Mathematical Sciences
\\University of Nottingham
\\NG7 2RD, UK}
\email{alastair.fletcher@nottingham.ac.uk}
\author{Jim Langley}
\address{School of Mathematical Sciences
\\University of Nottingham
\\NG7 2RD, UK}
\email{jkl@maths.nott.ac.uk}
\author{Janis Meyer}
\address{School of Mathematical Sciences
\\University of Nottingham
\\NG7 2RD, UK}
\email{janis.meyer@maths.nottingham.ac.uk}
\thanks{This research was supported by:
the G.I.F, the German-Israeli
Foundation for Scientific Research and Development,
Grant G-809-234.6/2003, and the EU Research Training Network CODY
(Bergweiler); EPSRC grant RA22AP( Fletcher and Langley); the
ESF Research Networking Programme HCAA (Bergweiler and Langley);
DFG grant ME 3198/1-1 (Meyer).}

\begin{abstract}
We show that if the maximum modulus of a quasiregular mapping 
$f: \R^N \to \R^N$ grows sufficiently rapidly then there exists a
non-empty escaping set $I(f)$ consisting of
points whose forward orbits under iteration of $f$ tend to infinity.
We also construct a quasiregular mapping for which the closure of $I(f)$
has a bounded component. This stands
in contrast to the situation for entire functions in the complex
plane, for which all components of the closure of $I(f)$ are unbounded,
and where it is in fact conjectured that all components of $I(f)$ are
unbounded.
\\
MSC 2000: Primary 30C65, 30C62; secondary 37F10.
\end{abstract}
\maketitle
\section{Introduction}

In the study \cite{Ber4} of the dynamics of nonlinear entire functions
$f: \C \to \C$ considerable recent attention has focussed
on the escaping set
$$
I(f) = \{ z \in \C : \lim_{n \to \infty} f^n(z) = \infty \} ,
$$
where $f^1  = f, f^{n+1} = f \circ f^n$ denote the iterates of $f$.
Eremenko \cite{Er0} proved that if $f$ is transcendental then
$I(f) \neq \emptyset$ and indeed that, in keeping with the nonlinear
polynomial case \cite{Stei2}, the boundary of $I(f)$ is the Julia set
$J(f)$. The proof in \cite{Er0}
that $I(f)$ is non-empty
is based on the Wiman-Valiron theory \cite{Hay5}.

For transcendental entire
functions $f$, Eremenko went on to prove in \cite{Er0} that 
all components of 
the closure of $I(f)$ are unbounded, and to conjecture that the same is true
of $I(f)$ itself. For entire functions with bounded postcritical 
set this conjecture was proved by Rempe
\cite{rempe}, and for the general case
it was shown by Rippon and Stallard \cite{ripstall1}
that $I(f)$ has at least one unbounded component. 

In the meromorphic case the set $I(f)$ was first studied by Dominguez
\cite{Dom}, who proved that again $I(f) \neq \emptyset$ and 
$\partial I(f) = J(f)$. For meromorphic $f$ it is possible that all 
components of $I(f)$ are bounded \cite{Dom}, and the closure
of $I(f)$ may have bounded components even if $f$ has only one pole
\cite[p.229]{Dom}. On the other hand $I(f)$ always has at least one
unbounded component if the inverse function $f^{-1}$ has a direct
transcendental singularity over infinity: this
was proved by Bergweiler, Rippon and Stallard \cite{BRS} 
by developing an analogue of the Wiman-Valiron
theory in the presence of a direct singularity. 

The present paper is concerned with the escaping set for
quasiregular mappings $f: \R^N \to \R^N$ \cite{rick1},
which represent a natural
counterpart in higher real dimensions of analytic functions in the plane, 
and exhibit many analogous properties, a highlight among
these being Rickman's Picard theorem for entire quasiregular maps
\cite{rick2,rick1}. 
For the precise definition and further properties 
of quasiregular mappings we
refer the reader to Rickman's text \cite{rick1}.
Now the iterates of an entire quasiregular map are again quasiregular, 
and properties such as the existence of periodic points were investigated
in \cite{Ber6,siebert}. Further, there is increasing interest in the dynamics
of quasiregular mappings on the compactification $\overline{\R^N}$ of
$\R^N$, although attention has been restricted to
mappings which are uniformly quasiregular
in the sense that all iterates have a common 
bound on their dilatation: see  
\cite[Section 21]{IwMa} and \cite{HMM}. In the absence of this
uniform quasiregularity there are evidently some difficulties
in extending some concepts of complex dynamics to
quasiregular mappings in general, but the escaping set $I(f)$
makes sense nevertheless, and we 
shall prove the following theorem.

\begin{thm}\label{thm1}
Let $N \geq 2$ and $K > 1$. Then there
exists $J > 1$, depending only on $N$ and $K$, with the following
property.

Let $R > 0$ and let $f: D_R \to \R^N$ be a $K$-quasiregular 
mapping, where $D_R \subseteq \R^N$ is a domain containing the set
\begin{equation}
B_R = \{ x \in \R^N : R \leq |x| < \infty \} .
\label{4}
\end{equation}
Assume that $f$ satisfies
\begin{equation}
\liminf_{r \to \infty} \frac{M(r, f)}{r} \geq J , \quad \hbox{where} \quad
M(r, f) = \max \{ |f(x)| : |x| = r \}  ,
\label{2}
\end{equation}
and define the escaping set by
\begin{equation}
I(f) = \{ x \in \R^N : \lim_{n \to \infty} f^n(x)= \infty \},
\quad f^1 = f, \quad f^{n+1} = f \circ f^n .
\label{3}
\end{equation}
Then $I(f)$ is non-empty. 
If, in addition, $f$ is $K$-quasiregular on $\R^N$ 
then $I(f)$ has an unbounded component.
\end{thm}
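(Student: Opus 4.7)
The plan is to prove the two assertions via nested preimage constructions driven by the growth hypothesis, with the main new ingredient being a covering lemma for quasiregular maps: if $M(r,f) \geq Jr$ with $J = J(K,N)$ sufficiently large, then some compact (respectively, unbounded closed connected) subset of a neighbourhood of the sphere $\{|x|=r\}$ is mapped by $f$ into $\{|y| \geq J'r\}$ for some $J' > 1$. I expect such a lemma to follow from the standard topological theory of quasiregular mappings---openness, sense-preservation and path-lifting---together with Rickman's modulus and distortion estimates \cite{rick1}, playing here the role of Wiman--Valiron in Eremenko's argument \cite{Er0}.

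Granted this covering lemma, for non-emptiness of $I(f)$ I would fix $J$ so large that $J' > 1$, pick $R_1 \geq R$ with $M(r,f) \geq Jr$ for $r \geq R_1$, and set $r_0 = R_1$, $r_{n+1} = J' r_n$. I then build inductively nonempty compact sets $F_n$ with $F_{n+1} \subseteq F_n$, with $F_0$ contained in a compact subset of $B_R$, and such that $f^k(F_n) \subseteq \{|y| \geq r_k\}$ for all $k \leq n$. The inductive step is exactly the covering lemma applied to $f$ on a neighbourhood of the sphere carrying $f^n(F_n)$. By compactness $\bigcap_n F_n$ is nonempty, and any point of it lies in $I(f)$.

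For the unbounded component, now assuming $f$ is $K$-quasiregular on all of $\R^N$, I would repeat the construction with each $F_n$ replaced by an unbounded closed connected set $C_n$, again with $C_{n+1} \subseteq C_n$ and $f^k(C_n) \subseteq \{|y| \geq r_k\}$ for $k \leq n$; the initial set $C_0$ can be taken to be $\{|x| \geq R_1\}$. The crucial observation is that the growth hypothesis forces the complement of $f^{-1}(\{|y| \geq r\})$ to be bounded for all sufficiently large $r$, so that the preimages involved are unbounded open sets meeting every large sphere. Once the $C_n$ are constructed, a standard topological lemma on nested unbounded closed connected subsets of $\R^N$ (compare \cite{ripstall1,BRS}) gives that $\bigcap_n C_n$ contains a point whose connected component in $I(f)$ is unbounded.

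The main obstacle is the unbounded-continuum version of the inductive step: ensuring that $C_n \cap f^{-(n+1)}(\{|y| \geq r_{n+1}\})$ still contains an unbounded closed connected subset. Openness and the sense-preserving property of $f$, combined with the fact that $C_n$ is an unbounded continuum while $\R^N \setminus f^{-1}(\{|y| \geq r_{n+1}\})$ is bounded, should force $C_n$ to meet every sufficiently large sphere; a boundary-bumping argument applied to components of the preimage should then yield the desired unbounded continuum inside $C_n$. Making the choice of $J$ (and hence $J'$) large enough in the covering lemma to drive this continuum-valued induction is what I expect to be the most delicate point.
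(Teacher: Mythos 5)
There is a genuine gap in both halves of your plan. For non-emptiness, the covering lemma you postulate is too weak to drive the induction: in its compact form it is trivially true (a single point $x_0$ with $|f(x_0)|=M(r,f)$ already lies in a neighbourhood of the sphere and is mapped into $\{|y|\geq Jr\}$), and precisely for that reason it cannot produce $F_{n+1}$ from $F_n$. The set $f^n(F_n)$ is merely some compact subset of $\{|y|\geq r_n\}$ -- possibly a single point -- and the hypothesis controls only the maximum of $|f|$ over whole spheres, so it says nothing about $|f|$ at the particular points of $f^n(F_n)$; these may all be mapped back inside $\{|y|<r_{n+1}\}$, leaving $F_n\cap f^{-(n+1)}(\{|y|\geq r_{n+1}\})$ empty. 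What the iteration actually requires is that the image of a suitable annulus $A(R,\rho)$ contains an entire sphere $S(0,L)$ with $L\geq cM(\rho/2,f)$, so that any point of the new target sphere can be pulled back into the previous set; this Bohr-type covering statement is the heart of the paper's proof, and it is not a consequence of openness, sense-preservation, path-lifting and distortion estimates alone: it is proved by an omitted-values argument using Rickman's Picard theorem through Miniowitz's Montel/Schottky-type theorem, and this is exactly where the constant $J=J(N,K)$ arises. That some such non-elementary input is unavoidable is shown by the paper's quasiconformal example in $\R^3$ with $M(r,f)/r\equiv e^{\lambda}>1$ and $f^2=\mathrm{id}$, for which $I(f)=\emptyset$; so no argument in which ``$J$ sufficiently large'' is left unexplained can be complete.

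For the unbounded component, your ``crucial observation'' -- that the growth hypothesis forces $\{x:|f(x)|<r\}$ to be bounded -- is false: $f(z)=e^{z}$ in the plane satisfies the growth hypothesis for every $J$, yet $\{|f|<r\}$ is an unbounded half-plane; a maximum-modulus hypothesis gives no control of the minimum modulus. Consequently your preimage sets need not meet every large sphere, and the continuum-valued induction as described does not get started. The paper obtains unboundedness by a purely topological route instead: if $E$ is a continuum in $\overline{\R^N}$ containing $\infty$ and $g:\R^N\to\R^N$ is continuous and open, then $g^{-1}(E)$ has no bounded component; this is applied with $g=f^n$ and $E=A_n$, the component containing $\infty$ of the complement of $f(G_{n-1})$ arising from the Bohr-type lemma, giving nested unbounded closed connected sets $L_n$ through a fixed escaping point $\hat z$, whose intersection (after adjoining $\infty$ and using Newman's lemma that the components in $\R^N$ of a continuum through $\infty$ are unbounded) yields the unbounded component of $I(f)$. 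Your closing appeal to a ``standard topological lemma'' is in the right spirit, but without the sphere-covering lemma and with the incorrect boundedness claim the sets $C_n$ you need are never constructed.
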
  

The proof of Theorem \ref{thm1} is based on the approach of Dominguez
\cite{Dom}, as well as that of Rippon and Stallard \cite{ripstall1}.
A key role is played also by the analogue
of Zalcman's lemma \cite{Zalcman1,Zalcman2} developed for quasiregular
mappings by Miniowitz \cite{Minio} (see \S\ref{qr}). 
It seems worth observing that in Theorem \ref{thm1} the hypothesis 
(\ref{2}) cannot be replaced by 
$$
\liminf_{r \to \infty} \frac{M(r,f)}{r} > 1,
$$
as the following example shows. Take cylindrical polar 
coordinates $ r \cos \theta ,  r \sin \theta ,  x_3$ in $\R^3$,
let $\lambda > 0$ and
and let $f$ be the mapping defined by
$$
0 \rightarrow 0, \quad ( r e^{i \theta } , x_3) \rightarrow 
(r e^{ \lambda \cos \theta + i ( \theta + \pi ) } , x_3) .
$$
Then $f^2$ is given by
$$
( r e^{i \theta } , x_3) \rightarrow 
(r e^{ \lambda  \cos \theta + \lambda \cos (\theta + \pi )
+ i ( \theta + 2 \pi ) } , x_3)
$$
and so is the identity, while since $f$ is $C^1$ on $\R^3 \setminus \{ 0 \}$
and satisfies $f(2x) = 2 f(x)$ it is easy to see that $f$ is
quasiconformal on $\R^3$. On the other hand if $f: \R^N \to \R^N$ is 
quasiregular
with an essential singularity at infinity, then $M(r, f)/r \to \infty$ as
$r \to \infty$ (see, for example, \cite[Lemma 3.4]{Ber6}) so that
(\ref{2}) holds with any $J > 1$.

Next, we show in \S\ref{exa} that there exists a quasiregular
mapping $f$ on $\R^2$ with an essential singularity at infinity, such
that the closure of $I(f)$ has a bounded component. Thus while 
the result of \cite{ripstall1} that $I(f)$ has at least one 
unbounded component extends to quasiregular mappings by
Theorem \ref{thm1}, Eremenko's theorem \cite{Er0} that all
components of the closure of $I(f)$ are unbounded does not.

We remark finally that it is easy to show that if
$f$ is quasimeromorphic with infinitely many poles in $\R^N$ then
$I(f)$ is non-empty, and for completeness we outline how this is proved
in \S\ref{mero}, using the ``jumping from pole to pole'' method
\cite{BRS,Dom}.

\section{Theorems of Rickman and Miniowitz}\label{qr}
Let $G$ be a domain in $\R^N$. A continuous mapping $f: G \to \R^N$ 
is called quasiregular \cite{rick1} if $f$ belongs to the Sobolev
space $W_{N, {loc}}^1 (G)$ and there exists $K \in [1, \infty)$ such
that
$$
|f'(x)|^N \leq K J_f \quad \hbox{a.e. in $G$.}
$$
Moreover, $f$ is called $K$-quasiregular if its inner and outer dilations
do not exceed $K$: 
for the details and equivalent definitions we refer the reader to
\cite{rick1}.  
Rickman proved \cite{rick2,rick1}
that given $N \geq 2$ and $K \geq 1$ there exists an
integer $C(N, K)$ such that if $f$ is $K$-quasiregular on $\R^N$
and omits $C(N, K)$ distinct values $a_j \in \R^N$ then $f$ is constant.
Here $C(2, K) = 2$ because a quasiregular mapping in $\R^2$ may be
written as the composition of a quasiconformal mapping with an entire
function, but for $N \geq 3$ this integer $C(N, K)$ in general
exceeds $2$ \cite{rick3,rick1}.

Miniowitz \cite{Minio}
established for quasiregular mappings the following direct analogue 
of Zalcman's lemma \cite{Zalcman1,Zalcman2}. 
A family $F$ of $K$-quasiregular mappings on the 
unit ball $B^N$ of $\R^N$ is not normal if and only if there exist
$$
f_n \in F, \quad x_n \in B^N, \quad x_n \to \hat x \in B^N,
\quad \rho_n \to 0+ ,
$$
such that
$$
f_n(x_n + \rho_n x) \to f(x)
$$
locally uniformly in $\R^N$, where $f$ is $K$-quasiregular and non-constant.
Using this she established the following analogue of Montel's theorem,
in which $C(N, K)$ is the integer from Rickman's theorem
\cite{rick2} and $\chi (x, y)$ denotes the
spherical distance on $\R^N$. 

\begin{thm}[\cite{Minio}]\label{thmA}
Let $N \geq 2, K > 1, \varepsilon > 0$ and let $D$ be a domain in
$\R^N$. Let $F$ be a family of functions $K$-quasiregular on $D$ with
the following property. Each $f \in F$ omits 
$q = C(N, K)$ values $a_1 (f), \ldots , a_q(f)$ on $D$, which may depend
on $f$ but satisfy
$$
\chi (a_j(f), a_k(f)) \geq \varepsilon \quad \hbox{for} \quad j \neq k.
$$
Then $F$ is normal on $D$.
\end{thm}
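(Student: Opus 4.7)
The plan is to argue by contradiction, following the standard route from Zalcman's lemma plus a Picard-type theorem to a Montel-type normality theorem. Suppose $F$ is not normal on $D$; after a harmless translation and dilation reducing the question to the unit ball, the version of Zalcman's lemma quoted just above furnishes sequences $f_n \in F$, $x_n \to \hat x$, and $\rho_n \to 0^+$ with $g_n(x) := f_n(x_n + \rho_n x) \to g(x)$ locally uniformly on $\R^N$, where $g$ is a non-constant $K$-quasiregular mapping. Since each $f_n$ omits the $q$ values $a_1(f_n),\ldots,a_q(f_n)$ on $D$, the rescaled map $g_n$ also omits them on every fixed compact subset of $\R^N$ once $n$ is large enough.

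Next I would pass to a subsequence, using compactness of $\overline{\R^N}$ in the spherical metric, so that $a_j(f_n) \to a_j^* \in \overline{\R^N}$ for each $j$. The separation condition $\chi(a_j(f_n), a_k(f_n)) \geq \varepsilon$ survives in the limit, so the $a_j^*$ are $q$ distinct points of $\overline{\R^N}$ and at most one of them equals $\infty$. A Hurwitz-type step for quasiregular maps then shows that $g$ omits each finite $a_j^*$: if $g$ took the value $a_j^*$ at some $x_0$, then by Reshetnyak's theorem $g$ would be discrete and open with positive local topological degree, covering a neighborhood of $a_j^*$; a local degree comparison with $g_n$ would then force $g_n$ to attain $a_j(f_n)$ on a nearby compact set for large $n$, contradicting the fact that $f_n$ omits $a_j(f_n)$.

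Viewing $g$ as a quasimeromorphic map of $\R^N$ into $\overline{\R^N}$, with $\infty$ itself playing the role of an omitted value in case $a_j^* = \infty$ for some $j$, the map $g$ then omits all $q = C(N,K)$ spherically distinct points $a_1^*,\ldots,a_q^*$, contradicting Rickman's Picard theorem. The main obstacle I anticipate is the Hurwitz step: whereas for holomorphic functions it is a direct consequence of the argument principle, in the quasiregular setting one must invoke Reshetnyak's open mapping theorem and some topological degree theory. A smaller subtlety is the case where one $a_j^* = \infty$, which can be accommodated by appealing to the quasimeromorphic rather than the quasiregular form of Rickman's theorem.
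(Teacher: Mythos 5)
Your overall route---Zalcman-type rescaling via Miniowitz's lemma, a Hurwitz-type degree argument using that nonconstant quasiregular maps are discrete and open, then Rickman's Picard theorem---is exactly the intended one: the paper does not prove Theorem~\ref{thmA} but quotes it from Miniowitz, whose argument is of this form, and your treatment of the case in which all limit values $a_j^*$ are finite is sound. The genuine gap is precisely the point you set aside as a ``smaller subtlety'', namely the case $a_{j_0}^*=\infty$. There the rescaled limit $g$ is quasiregular and omits only the $q-1$ finite points $a_j^*$, $j\neq j_0$, together with $\infty$; to get a contradiction you would need a quasimeromorphic Picard theorem with the \emph{same} constant $C(N,K)$, i.e.\ that a nonconstant $K$-quasimeromorphic map of $\R^N$ omits at most $C(N,K)-1$ points of $\overline{\R^N}$. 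That is false: the quasimeromorphic constant is strictly larger than the quasiregular one (already for $N=2$, where $C(2,K)=2$ while $\tan$ and its quasiconformal conjugates are nonconstant and omit the two points $\pm i$). So letting ``$\infty$ play the role of an omitted value'' yields no contradiction, and your proof does not close in this case.

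Moreover, this case cannot be repaired without an extra hypothesis, because when the omitted values are only mutually $\chi$-separated and are allowed to drift to $\infty$ the conclusion can genuinely fail. Take $N=2$, $D$ the unit disc and $f_n(z)=e^{nz}$: each $f_n$ omits $0$ and $e^n$ on $D$ (since $|e^{nz}|<e^n$ there), and $\chi(0,e^n)\geq e/\sqrt{1+e^2}$, yet the family is not normal at any point of the imaginary axis in $D$. The statement one can actually prove by your argument---and the form in which the theorem is used later in the paper, e.g.\ in Lemma~\ref{lem1}, where the omitted values satisfy $|y_j|=4^{j-1}\leq 4^{q-1}$ and so are uniformly bounded---requires in addition that the omitted values be spherically bounded away from $\infty$ (equivalently, one works with quasimeromorphic families omitting $C(N,K)+1$ pairwise $\chi$-separated points of $\overline{\R^N}$). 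With that extra separation from $\infty$, every limit value $a_j^*$ is finite and your argument goes through verbatim; without it, the step you flagged is not a technicality but the place where the proof, as written, fails.
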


Theorem \ref{thmA} leads at once to the following standard lemma of Schottky
type.

\begin{lem}\label{lem1}
Let $N \geq 2$ and $K > 1$. Then there exists $Q > 2$ with the following
property. Let $f$ be $K$-quasiregular on the set
$\{ x \in \R^N : 1 < |x| < 4 \}$ such that $f$ omits
$q = C(N, K)$ values $y_1, \ldots , y_q $, with
$$
|y_j| = 4^{j-1}, \quad j = 1, \ldots , q.
$$
If $\min \{ |f(x)| : |x| = 2 \} \leq 2$
then $\max \{ |f(x)| : |x| = 2 \} \leq Q$.
\end{lem}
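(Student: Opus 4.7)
The plan is to argue by contradiction, reducing the assertion to an immediate application of Theorem \ref{thmA}. Suppose no such $Q$ exists. Then for each positive integer $n$ there is a $K$-quasiregular mapping $f_n$ on $A = \{x\in\R^N:1<|x|<4\}$ that omits values $y_1^{(n)},\ldots,y_q^{(n)}$ satisfying $|y_j^{(n)}|=4^{j-1}$ and that meets both
\begin{equation*}
\min_{|x|=2}|f_n(x)|\le 2\qquad\text{and}\qquad M_n:=\max_{|x|=2}|f_n(x)|\to\infty.
\end{equation*}

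The first step is to check that Theorem \ref{thmA} applies to $\{f_n\}$ on $A$. The omitted values $y_j^{(n)}$ depend on $n$, but only their moduli are prescribed, so I must show their pairwise spherical distances are bounded below by a constant depending only on $N$ and $K$. For $j<k$ one has $|y_j^{(n)}-y_k^{(n)}|\ge 4^{k-1}-4^{j-1}$, and since both $(1+4^{2(j-1)})^{1/2}$ and $(1+4^{2(k-1)})^{1/2}$ are controlled by $4^{j-1}$ and $4^{k-1}$ up to a factor $\sqrt 2$, a direct estimate gives $\chi(y_j^{(n)},y_k^{(n)})\ge\varepsilon$ for some $\varepsilon=\varepsilon(q)=\varepsilon(N,K)>0$. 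Hence by Theorem \ref{thmA} the family $\{f_n\}$ is normal on $A$ in the spherical metric.

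Next I would pass to a subsequence converging locally uniformly on $A$, with respect to $\chi$, to a limit $f$ that is either $K$-quasiregular on $A$ or is identically $\infty$. Both possibilities lead to contradictions on the compact sphere $S=\{|x|=2\}\subset A$. If $f$ is $K$-quasiregular then $|f|$ is bounded on $S$, and uniform convergence in $\chi$ on $S$ forces $\sup_n M_n<\infty$, contradicting $M_n\to\infty$. If $f\equiv\infty$ then $|f_n|\to\infty$ uniformly on $S$, contradicting $\min_{|x|=2}|f_n(x)|\le 2$.

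I do not expect a serious obstacle here; the only genuinely nontrivial point is verifying the $\varepsilon$-separation of the omitted values in the spherical metric so that Theorem \ref{thmA} is available uniformly along the sequence. Once this is in place the result follows from the standard normality-plus-dichotomy argument sketched above.
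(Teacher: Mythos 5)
Your proposal is correct and is essentially the approach the paper intends: the paper gives no proof of Lemma \ref{lem1}, saying only that it follows at once from Theorem \ref{thmA}, and your contradiction-via-normality argument is the standard deduction. The two points you rightly single out — the uniform spherical separation of the omitted values $y_j^{(n)}$ (which follows from $|y_j^{(n)}|=4^{j-1}$ with a constant depending only on $q=C(N,K)$) and the Hurwitz-type dichotomy that a spherical locally uniform limit of $K$-quasiregular maps is $K$-quasiregular or $\equiv\infty$ — are exactly the standard ingredients, both available in Miniowitz's paper.
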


\section{Two lemmas needed for Theorem \ref{thm1}}

We need the following two facts, the first of which is
from Newman's book \cite[Exercise, p.84]{Newman}:

\begin{lem}\label{lemA}
Let $G$ be a continuum in $\overline{\R^N} = \R^N \cup \{ \infty \}$
such that $\infty \in G$, and let $H$ be a component of $\R^N \cap G$.
Then $H$ is unbounded.
\end{lem}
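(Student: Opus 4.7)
The plan is to prove the contrapositive: assume that $H$ is bounded and derive a contradiction with the connectedness of $G$, by exhibiting a proper nonempty clopen subset of $G$.

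Under the assumption that $H$ is bounded, I would first observe that $H$ itself is compact. Since $G$ is compact, hence closed, in $\overline{\R^N}$, the closure $\overline{H}$ in $\overline{\R^N}$ lies in $G$; boundedness of $H$ gives $\infty \notin \overline{H}$, so $\overline{H} \subseteq G \setminus \{\infty\}$, and by maximality of the component $H$ we must have $\overline{H} = H$. Next, since $G$ is compact Hausdorff and therefore normal, the disjoint closed sets $H$ and $\{\infty\}$ can be separated by disjoint open-in-$G$ neighborhoods $U \supseteq H$ and $V \ni \infty$, so in particular $\infty \notin \overline{U}$. The compact Hausdorff subspace $\overline{U}$ is contained in $G \setminus \{\infty\}$, and $H$ is a component of $\overline{U}$: any strictly larger connected subset of $\overline{U}$ would be a connected subset of $G \setminus \{\infty\}$ properly containing $H$, contradicting that $H$ is a component there.

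At this point I would apply the standard fact that in a compact Hausdorff space every component is the intersection of the clopen sets containing it. Applied in $\overline{U}$, this yields a clopen (in $\overline{U}$) set $L$ with $H \subseteq L \subseteq U$. Writing $L = O \cap \overline{U}$ for some $O$ open in $G$, and using $L \subseteq U \subseteq \overline{U}$, one obtains $L = O \cap U$, which is open in $G$; and $L$ is closed in the compact set $\overline{U}$, hence closed in $G$. Thus $L$ is a proper nonempty clopen subset of $G$, contradicting that $G$ is a continuum.

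The step I expect to be the main obstacle is the abstract topological input at the last stage---the equality of components and quasicomponents in a compact Hausdorff space. A more direct alternative is to invoke the Boundary Bumping Theorem, which asserts that every component of a proper open subset $W$ of a continuum $X$ meets the boundary $\partial W$; taking $X = G$ and $W = G \setminus \{\infty\}$, the boundary is contained in $\{\infty\}$, so the component $H$ has $\infty$ in its closure in $\overline{\R^N}$ and is therefore unbounded in $\R^N$.
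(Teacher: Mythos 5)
Your argument is correct, but there is nothing in the paper to compare it with: the paper does not prove Lemma \ref{lemA} at all, it simply quotes it as a known fact from Newman's book (cited as an exercise on p.~84), reserving its own proof effort for Lemma \ref{lem11}. Your write-up therefore supplies a self-contained justification of the cited result. The main route you take --- show $H$ is compact, separate $H$ from $\infty$ by normality, note $H$ is a component of the compact set $\overline{U}$, and then use the equality of components and quasicomponents in compact Hausdorff spaces to manufacture a proper nonempty clopen subset of $G$ --- is sound; the one step you state a little too quickly is the passage from ``$H$ is the intersection of the clopen subsets of $\overline{U}$ containing it'' to ``there is a single clopen $L$ with $H\subseteq L\subseteq U$,'' which needs the standard finite-intersection/compactness argument (the complements of these clopen sets cover the compact set $\overline{U}\setminus U$, so finitely many suffice, and the corresponding finite intersection is the desired $L$). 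Your alternative via the Boundary Bumping Theorem, applied to the continuum $G$ and the open subset $G\setminus\{\infty\}$ whose boundary lies in $\{\infty\}$, is shorter and is essentially the form in which the result appears in the topology literature the paper is implicitly leaning on; either version is an acceptable replacement for the citation, with the first being more elementary in its inputs and the second more economical.
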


This leads on to the second fact we need:
%We then prove:

\begin{lem}\label{lem11}
Let $E$ be a continuum in
$\overline{\R^N} $ such that $\infty \in E$, and let $g : \R^N \to \R^N$
be a continuous open mapping. Then the preimage
$$g^{-1}(E)=
\{ x \in \R^N : g(x) \in E \} $$ 
cannot have a bounded component.
\end{lem}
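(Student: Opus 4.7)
My plan is to argue by contradiction, exploiting the openness of $g$ to convert a bounded component of $g^{-1}(E)$ into a bounded open set in $\R^N$ whose closure would separate the continuum $E$ from $\infty$. So suppose that $C$ is a bounded component of the closed set $F := g^{-1}(E) = g^{-1}(E \cap \R^N)$; note that $E \cap \R^N$ is closed in $\R^N$ since $E$ is closed in $\overline{\R^N}$ and $\R^N$ is open there. The first step is to produce a bounded open $V \subseteq \R^N$ with $C \subseteq V$ and $\partial V \cap F = \emptyset$. This is a classical topological fact about compact components of closed sets in $\R^N$: one intersects $F$ with a large closed ball so that $C$ becomes a component of a compact set, and then separates $C$ from the remainder by a clopen subset, using the coincidence of components and quasi-components in compact Hausdorff spaces, before enlarging slightly to an open neighbourhood.

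Next I would set $W = g(V)$. Since $g$ is open, $W$ is open; since $\overline{V}$ is compact and $g$ continuous, $W$ is contained in the compact set $g(\overline{V})$ and hence is bounded. The crucial inclusion is $\partial W \subseteq g(\partial V)$: given $y \in \partial W$, write $y = \lim_{n \to \infty} g(x_n)$ with $x_n \in V$, use boundedness of $V$ to pass to a subsequence with $x_n \to x \in \overline{V}$, observe that $g(x) = y$ by continuity, and note that $x \in V$ would place $y$ in the open set $W$, contradicting $y \in \partial W$. Hence $x \in \partial V$, and combining with the choice of $V$ we obtain $\partial W \cap E \subseteq g(\partial V) \cap E = \emptyset$.

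To finish, $g(C) \subseteq E \cap W$ is nonempty (since $C$ is a nonempty component), whereas $\infty \in E$ lies in $\overline{\R^N} \setminus \overline{W}$ because $W$ is bounded. Thus $W$ and $\overline{\R^N} \setminus \overline{W}$ are disjoint open subsets of $\overline{\R^N}$, each meeting $E$, whose union covers $E$ (since the complementary set $\partial W$ is disjoint from $E$); this disconnects $E$, contradicting its connectedness. The main obstacle is the first step, the existence of the insulating neighbourhood $V$, which requires the standard but not entirely routine separation theorem for compact components of closed sets; all subsequent steps follow from the openness and continuity of $g$ together with the connectedness of $E$. Note that Lemma \ref{lemA} cannot be applied directly to $g^{-1}(E) \cup \{\infty\}$, since this set need not be connected.
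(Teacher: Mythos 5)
Your proof is correct, but it takes a genuinely different route from the paper's. The paper proves the stronger fact that $F = g^{-1}(E) \cup \{\infty\}$ is itself a continuum whenever $g^{-1}(E) \neq \emptyset$: assuming a disconnection $F = H_1 \sqcup H_2$ into nonempty disjoint closed sets with $\infty \in H_2$, it derives a contradiction by splitting $E$ into the disjoint nonempty closed subsets $g(H_1)$ and $E \setminus g(H_1)$, using openness of $g$ to show the latter is closed; Lemma~\ref{lemA} then gives the conclusion. You instead start from a putative bounded component $C$ of $g^{-1}(E)$, construct an insulating bounded open neighbourhood $V$ with $\partial V \cap g^{-1}(E) = \emptyset$ via the \v{S}ura-Bura (quasi-component) separation theorem for compact components of closed sets, and then disconnect $E$ by the pair of open sets $W = g(V)$ and $\overline{\R^N}\setminus\overline{W}$. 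Both arguments ultimately disconnect $E$ using openness and compactness, but yours requires the extra topological separation step that you correctly flag as nontrivial, while the paper's clopen decomposition of $F$ is handed to it for free by the assumed disconnection and it then only needs the already-stated Lemma~\ref{lemA}. One small correction: your closing remark that $g^{-1}(E)\cup\{\infty\}$ ``need not be connected'' is not accurate — the paper's argument shows it is always connected under the hypotheses of the lemma (when $g^{-1}(E)\neq\emptyset$). Your underlying point, that this connectedness is not obvious a priori and so Lemma~\ref{lemA} cannot be invoked without further work, is of course valid, and is exactly the work the paper does.
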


For completeness we give a proof of Lemma \ref{lem11} in
\S\ref{pflem11}.

\section{An analogue of Bohr's theorem}\label{bohr}

Let $f: D_R \to \R^N$ be $K$-quasiregular,
where $D_R \subseteq \R^N$ is a domain containing
the set $B_R$ in (\ref{4}), and assume that $f$ satisfies
(\ref{2}) for some $J > 1$.
For $0 \leq r < s \leq \infty$ set
$$
A(r, s) = \{ x \in \R^N : r < |x| < s \} . 
$$
Using (\ref{2}) choose $s_0 > R$ such
that
$$
M(r, f) > M(R, f) \quad \hbox{for all} \quad r \geq s_0 .
$$
Then $M(r, f)$ is strictly increasing on $[s_0, \infty)$ because if
$s_0 \leq r_1 < r_2 < \infty$ and $M(r_2, f) \leq M(r_1 , f)$ then
$|f(x)|$ has a local maximum at some $\hat x \in A(R, r_2)$, which
contradicts the openness of the mapping $f$. Following Dominguez \cite{Dom}
we establish a lemma analogous to Bohr's theorem.

\begin{lem}\label{lem2}
Let $c = 1/2Q$, where $Q$ is the constant of Lemma \ref{lem1}.
Then for all sufficiently large $\rho$ there exists 
$L \geq c M( \rho /2 , f)$ such that
$$
S(0, L) = \{ x \in \R^N : |x| = L \} \subseteq f( A(R, \rho)) .
$$
\end{lem}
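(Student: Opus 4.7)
The plan is to proceed by contradiction: assume that for some arbitrarily large $\rho$, no $L \geq L_0 := cM(\rho/2,f)$ satisfies $S(0,L) \subseteq f(A(R,\rho))$. Then every sphere of radius at least $L_0$ meets the complement of $f(A(R,\rho))$, so I can choose omitted values $y_1,\dots,y_q$ (with $q=C(N,K)$) satisfying $|y_j|=4^{j-1}L_0$.

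The first step is to apply Lemma \ref{lem1} rescaled to the annulus $A(s,4s)\subseteq A(R,\rho)$ (so $s\in[R,\rho/4]$), with scale factor $L_0$. Its contrapositive yields, for each $r=2s\in[2R,\rho/2]$, the implication $M(r,f)>QL_0 \Rightarrow \min_{|x|=r}|f(x)|>2L_0$. Using the monotonicity of $M(\cdot,f)$ on $[s_0,\infty)$ established just before the lemma, and the identity $M(\rho/2,f)=2QL_0$, for $\rho$ large there is a unique $r_2\in(s_0,\rho/2)$ with $M(r_2,f)=QL_0$. Continuity of $r\mapsto\min_{|x|=r}|f(x)|$ then gives $\min_{|x|=r_2}|f|\ge 2L_0$, so that $f(\{|x|=r_2\})\subseteq\{2L_0\le|y|\le QL_0\}$; for $\rho$ large one also has $M(R,f)<L_0$, so $f(\{|x|=R\})\subseteq\{|y|<L_0\}$.

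The core of the argument is then topological, applied to the open, connected set $V=f(A(R,r_2))$ (open because quasiregular maps are open). Compactness of $\overline{A(R,r_2)}$ gives $\partial V\subseteq f(\partial A(R,r_2))\subseteq\{|y|<L_0\}\cup\{2L_0\le|y|\le QL_0\}$, so the shell $T=\{L_0\le|y|<2L_0\}$ is disjoint from $\partial V$. Consequently $V\cap T$ is both open and closed in the connected set $T$, so equals either $T$ or $\emptyset$. If it equals $T$, then $S(0,L_0)\subseteq V\subseteq f(A(R,\rho))$, contradicting $y_1\notin f(A(R,\rho))$. Otherwise $V\cap T=\emptyset$, and openness of $V$ further rules out points of $V$ on the sphere $|y|=2L_0$ (a small open ball in $V$ around such a point would intrude into $T$); then $V$ lies in the disjoint union $\{|y|<L_0\}\sqcup\{|y|>2L_0\}$, but $V$ contains a point of modulus less than $L_0$ (take $x\in A(R,r_2)$ close to $\{|x|=R\}$) and one of modulus close to $QL_0>2L_0$ (take $x\in A(R,r_2)$ near a maximum point of $|f|$ on $\{|x|=r_2\}$), contradicting the connectedness of $V$.

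I expect the delicate step to be the topology: in particular verifying $\partial V\subseteq f(\partial A(R,r_2))$ (which follows from the compactness of $\overline{A(R,r_2)}$ together with the inclusion $V\subseteq f(A(R,r_2))$) and eliminating the sphere $|y|=2L_0$ from $V$ via openness of $f$. The crucial arithmetic is that the choice $c=1/(2Q)$ makes the ratio $QL_0/(2L_0)=Q/2>1$, leaving the gap $T$ on which to run the clopen dichotomy.
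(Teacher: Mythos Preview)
Your argument is correct, and it is a genuinely different route from the paper's.

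The paper, after setting up the same omitted values $a_j$ with $|a_j|=4^{j-1}S$ (your $y_j$ and $L_0$), chooses a single point $x_1\in A(R,\rho/2)$ with $|f(x_1)|=S$ and looks at the component $G$ of $\{T<|f|<2S\}$ through $x_1$. If $G$ stays inside $A(R,\rho/2)$, a path-lifting argument along a geodesic on $S(0,S)$ from $f(x_1)$ to $a_1$, combined with openness of $f$, forces $a_1\in f(A(R,\rho))$, a contradiction. If $G$ escapes to $S(0,\rho/2)$, one obtains a point $x_2\in S(0,\rho/2)$ with $|f(x_2)|\le 2S$, and a \emph{single} application of Lemma~\ref{lem1} at the radius $\rho/2$ yields $M(\rho/2,f)\le QS=M(\rho/2,f)/2$.

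You instead invoke Lemma~\ref{lem1} at every radius in $[2R,\rho/2]$ to manufacture a specific radius $r_2$ at which both $M(r_2,f)$ and $\min_{|x|=r_2}|f|$ are pinned, and then run a clopen dichotomy on the connected open image $V=f(A(R,r_2))$ against the shell $T$. This trades the paper's geodesic path-lifting for a cleaner global connectivity argument, at the cost of needing the monotonicity of $M(\cdot,f)$ (established just before the lemma) and a limiting step to pass from the strict inequality in Lemma~\ref{lem1} to $\min_{|x|=r_2}|f|\ge 2L_0$. Both proofs hinge on openness of $f$: the paper uses it locally in the infimum/geodesic step, while you use it to get $\partial V\subseteq f(\partial A(R,r_2))$ and to exclude $S(0,2L_0)$ from $V$.
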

\begin{proof}
Using (\ref{2}) let $\rho$ be so large that
\begin{equation}
\rho > 4R \quad \hbox{and} \quad S = c M( \rho /2 , f) > 2T = 4 M(R, f),
\label{rhodef}
\end{equation}
and assume that the assertion of the lemma is false for $\rho$.
Then for $j = 1, \ldots , q$, where $q = C(N, K)$ is the
integer from Rickman's Picard theorem
\cite{rick2} (see \S\ref{qr}), there exists $a_j \in \R^N$ with
\begin{equation}
|a_j| = 4^{j-1} S\quad \hbox{and}  \quad a_j \not \in  f( A(R, \rho)) .
\label{ajdef}
\end{equation}
Furthermore, there exists
$x_1 \in A(R, \rho/2) $ such that $|f(x_1)| = S $.
To see this join a point $x_0$ on $S(0, \rho /2)$ such that
$|f(x_0)| = M( \rho /2, f)$ to $S(0, R)$ by a radial segment and use
(\ref{rhodef}) and the fact that $c < 1$. 
Let $G$ be the component
of the set 
$$\{ x \in \R^N : T < |f(x)| < 2S \} $$
which contains $x_1$.
Then $G \subseteq A(R, \infty)$ by (\ref{rhodef}).
Suppose first that $G \subseteq A(R, \rho/2)$. Then the closure 
$\overline{G}$ of $G$ lies in $A(R, \rho)$, by (\ref{rhodef}) again.
Choose a geodesic $\sigma \subseteq S(0, S)$ joining
$f(x_1)$ to $a_1$. Let
$$
\mu = \inf \{ |f(x) - a_1 | : x \in \overline{G}, f(x) \in \sigma \}
$$
and take $\zeta_n \in \overline{G}$ with $f( \zeta_n) \in \sigma$ 
and $|f(\zeta_n) - a_1 | \to \mu$. Then we may assume that
$\zeta_n \to \hat \zeta \in \overline{G}$, and we have
$f( \hat \zeta ) \in \sigma $ and so $\hat \zeta \in G$. But then the
open mapping theorem forces
$\mu = | f( \hat \zeta ) - a_1 | = 0$,
which contradicts (\ref{ajdef}).

Thus $G \not \subseteq A(R, \rho/2)$ and this implies using (\ref{rhodef})
again that there must exist
$x_2 $ on $ S(0, \rho/2)$ such that $ |f(x_2)| \leq 2S $.
By (\ref{rhodef}) and (\ref{ajdef}) the function
$g(x) = f(x \rho/4)/S$
%g(x) = \frac{f(x \rho/4)}{S} $$
is $K$-quasiregular on $A(1, 4)$,  and omits the $q$ values
$y_j = a_j/S$, which satisfy $|y_j| = 4^{j-1}$.
%\frac{a_j}{S}, \quad |y_j| = 4^{j-1} .$$
Since $|g(4 x_2/ \rho ) |\leq 2$, Lemma \ref{lem1} implies that 
$|g(x)| \leq Q$ for $|x| = 2$, which gives
$$
M( \rho/2, f) \leq QS = Qc M( \rho/2, f) = \frac{M(\rho/2,f)}2 ,
$$
a contradiction.
%This proves Lemma \ref{lem2}.
\end{proof}

\section{Proof of Theorem \ref{thm1}}

Again let $f: D_R \to \R^N$ be $K$-quasiregular,
where $D_R \subseteq \R^N$ is a domain containing
the set $B_R$ in (\ref{4}), but this time assume that $f$ satisfies
(\ref{2}) for some large positive $J$. Retain the notation of 
\S\ref{bohr}. 
%By an annular domain
%$G$ we mean a bounded domain $G \subseteq \R^N$ such that
%$\R^N \setminus G$ has two components: $\widetilde G$, which is bounded,
%and $G^*$, which is unbounded. 
Following Dominguez' method \cite{Dom}
let $\rho_0 > R$ be so large that 
every $\rho \geq \rho_0$ satisfies the conclusion of Lemma \ref{lem2} 
and further that, with the same constant $c$ as in Lemma \ref{lem2},
\begin{equation}
c M(\rho/2, f) > 4 \rho > \rho > M(R, f) \quad \hbox{for all} \quad 
\rho \geq \rho_0 ,
\label{rhodef2}
\end{equation}
which is possible by (\ref{2}) and the assumption that
$J$ is large. Fix  $\rho \geq \rho_0$. 

\begin{lem}\label{lem3}
There exist bounded open sets $G_0, G_1, \ldots $ 
%\begin{equation}
%G_0 \subseteq G_1 \subseteq G_2 \subseteq \ldots 
%\label{g1}
%\end{equation}
with the following properties. \\
(i) The set $\overline{\R^N} \setminus G_n$ has two components, namely
$$
\widetilde G_n = \overline{B(0, R)} = \{ x \in \R^N : |x| \leq R \} 
$$
and $G_n^* = A_n $, which satisfies $\infty \in A_n$.\\
(ii) We have
%, for each $n$, 
\begin{equation}
 \{ x \in \R^N : R < |x| \leq  2^n \rho \} \subseteq G_n .
\label{g2}
\end{equation}
(iii)
The sets $G_n$, $A_n$ and 
$\gamma_n = \partial A_n $ satisfy
\begin{equation}
\gamma_{n+1} \subseteq f( \gamma_n) \quad
\hbox{and} \quad f(G_n) \cap A_{n+1} = \emptyset .
\label{g3}\end{equation}
\end{lem}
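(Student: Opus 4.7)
The plan is to construct $(G_n, A_n, \gamma_n)$ by induction on $n$. Throughout, I will take $A_{n+1}$ to be the closure in $\overline{\R^N}$ of the component of $\overline{\R^N} \setminus f(\overline{G_n})$ containing $\infty$. This choice immediately makes $f(G_n) \cap A_{n+1} = \emptyset$ (via the openness of $f(G_n)$, itself a consequence of $f$ being quasiregular, hence open) and gives $\gamma_{n+1} = \partial A_{n+1} \subseteq f(\partial G_n) = f(S(0, R)) \cup f(\gamma_n)$. The role of Lemma \ref{lem2} at each stage is to produce a sphere of radius $L_{n+1} > M(R, f)$ lying in $f(\overline{G_n})$, which on the one hand confines $A_{n+1}$ to $\{|y| \geq L_{n+1}\} \cup \{\infty\}$, and on the other hand forces $A_{n+1} \cap f(S(0, R)) = \emptyset$, thereby upgrading the boundary inclusion to $\gamma_{n+1} \subseteq f(\gamma_n)$.

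For the base case, apply Lemma \ref{lem2} with parameter $\rho$ to obtain $L_0 \geq cM(\rho/2, f)$ with $S(0, L_0) \subseteq f(A(R, \rho))$; by (\ref{rhodef2}), $L_0 > 4\rho > M(R, f)$. Set $G_0 = A(R, L_0)$, $A_0 = \{x \in \R^N : |x| \geq L_0\} \cup \{\infty\}$, and $\gamma_0 = S(0, L_0)$. Properties (i) and (ii) at $n = 0$ are immediate.

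For the inductive step, assume $(G_n, A_n, \gamma_n)$ satisfy (i) and (ii). Apply Lemma \ref{lem2} with parameter $2^n \rho \geq \rho_0$ to obtain $L_{n+1} \geq cM(2^{n-1}\rho, f)$ with $S(0, L_{n+1}) \subseteq f(A(R, 2^n \rho))$; by (ii) at $n$, $A(R, 2^n \rho) \subseteq G_n$, so $S(0, L_{n+1}) \subseteq f(G_n) \subseteq f(\overline{G_n})$. Using (\ref{rhodef2}), $L_{n+1} > 2^{n+2}\rho > M(R, f)$. Define $A_{n+1}$ as in the opening paragraph, $\gamma_{n+1} = \partial A_{n+1}$, and $G_{n+1} = \overline{\R^N} \setminus (\overline{B(0, R)} \cup A_{n+1})$. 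The component of $\overline{\R^N} \setminus f(\overline{G_n})$ containing $\infty$ is connected and disjoint from $S(0, L_{n+1})$, hence lies in $\{|y| > L_{n+1}\} \cup \{\infty\}$, so $A_{n+1} \subseteq \{|y| \geq L_{n+1}\} \cup \{\infty\}$. This gives (ii) at $n+1$, and since $f(\overline{G_n})$ is compact, $A_{n+1}$ contains a whole neighborhood of $\infty$, ensuring $G_{n+1}$ is bounded; the connectedness of $A_{n+1}$ and its disjointness from $\overline{B(0, R)}$ then yield (i).

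For (iii), the separation $L_{n+1} > M(R, f)$ combined with $f(S(0, R)) \subseteq \{|y| \leq M(R, f)\}$ forces $A_{n+1} \cap f(S(0, R)) = \emptyset$, which cuts $\partial A_{n+1} \subseteq f(S(0, R)) \cup f(\gamma_n)$ down to $\gamma_{n+1} \subseteq f(\gamma_n)$. I expect the main obstacle to be the careful boundary analysis in $\overline{\R^N}$ needed to establish $\partial A_{n+1} \subseteq f(\partial G_n)$ in the first place, which rests on the open-mapping property of quasiregular maps and the observation that $\partial G_n = S(0, R) \cup \gamma_n$ because $\overline{B(0, R)}$ and $A_n$ are the disjoint closed components of the complement.
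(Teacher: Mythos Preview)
Your proposal is correct and follows essentially the same inductive construction as the paper: apply Lemma~\ref{lem2} with parameter $2^n\rho$ to get a large sphere inside $f(G_n)$, take $A_{n+1}$ to be the unbounded complementary piece, and use the open-mapping boundary inclusion together with $M(R,f)<\rho<L_{n+1}$ to strip $f(S(0,R))$ out of $\gamma_{n+1}$. The only cosmetic differences are that the paper defines $A_{n+1}$ directly as the component of $\overline{\R^N}\setminus f(G_n)$ containing $\infty$ (rather than closing up a component of $\overline{\R^N}\setminus f(\overline{G_n})$), and simply takes $G_0=A(R,\rho')$ for any $\rho'>\rho$ without invoking Lemma~\ref{lem2} at the base step.
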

\begin{proof}
The open sets $G_n$ will be constructed inductively.
We begin by setting
$G_0 = A(R, \rho')$ for some $\rho' > \rho$,
so that (\ref{g2}) obviously is
satisfied for $n=0$. It remains to show how to construct
$G_{n+1}$ given the existence of $G_0, \ldots, G_n$ for some $n \geq 0$.
The fact that $f$ maps open sets to open sets gives
\begin{equation}
\partial f(G_n) \subseteq f( \partial G_n) 
= f(S(0,  R)) \cup f( \gamma_n ) ,
\label{8}
\end{equation}
using (i) and the definition $\gamma_n = \partial A_n$. 
By Lemma \ref{lem2}, (\ref{rhodef2}) and (\ref{g2}) there exists
\begin{equation}
T_n \geq c M( 2^{n-1} \rho , f) > 2^{n+2} \rho \quad \hbox{with}
\quad 
S(0, T_n) \subseteq f( A(R, 2^n \rho) ) \subseteq f(G_n).
\label{10}
\end{equation}
Now $f(G_n)$ is a bounded open set, so 
let $A_{n+1}$ be the component of $\overline{\R^N} \setminus f(G_n)$ which 
contains $\infty$ and set
\begin{equation}
\gamma_{n+1} = \partial A_{n+1} .
\label{11}
\end{equation}
Then by (\ref{10}) we have
\begin{equation}
\gamma_{n+1} \subseteq A_{n+1} \subseteq A(2^{n+2} \rho, \infty ), 
\label{12}
\end{equation}
and (\ref{rhodef2}), 
(\ref{8}) and (\ref{11}) 
imply the first assertion of 
(\ref{g3}). Let 
$$G_{n+1} = \R^N \setminus (\overline{B(0, R)} \cup A_{n+1}) .$$
%$$ \widetilde G_{n+1} = \{ x \in \R^N : |x| \leq R \} , \quad
%G_{n+1}^*  = A_{n+1} .$$
Then (i) is satisfied with $n$ replaced by $n+1$, and
the second assertion of (\ref{g3}) follows from the definition of
$A_{n+1}$. Finally (\ref{12}) shows that
(\ref{g2}) is satisfied with $n$ replaced by $n+1$, and so the
induction is complete. 
%This proves Lemma \ref{lem3}.
\end{proof}

\begin{lem}\label{lem4}
Let $w \in \gamma_n$. Then there exists $z_n \in \gamma_0$ with
$f^n(z_n) = w$ and 
\begin{equation}
f^m(z_n) \in \gamma_m \quad \hbox{for} \quad 
m = 0, \ldots , n.
\label{14}
\end{equation}
\end{lem}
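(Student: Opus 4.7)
My plan is to prove Lemma \ref{lem4} by a straightforward induction on $n$, exploiting the first assertion of (\ref{g3}), namely $\gamma_{n+1} \subseteq f(\gamma_n)$, which Lemma \ref{lem3} supplies.

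The base case $n=0$ is trivial: given $w \in \gamma_0$, simply set $z_0 = w$, so that $f^0(z_0) = w \in \gamma_0$ and (\ref{14}) holds vacuously for $m = 0$.

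For the inductive step, suppose the conclusion holds at level $n$ and let $w \in \gamma_{n+1}$. Since $\gamma_{n+1} \subseteq f(\gamma_n)$ by (\ref{g3}), I can pick some $w' \in \gamma_n$ with $f(w') = w$. Applying the induction hypothesis to $w' \in \gamma_n$, I obtain a point $z \in \gamma_0$ with $f^n(z) = w'$ and $f^m(z) \in \gamma_m$ for $m = 0, \ldots, n$. Setting $z_{n+1} := z$ gives $f^{n+1}(z_{n+1}) = f(w') = w \in \gamma_{n+1}$, while $f^m(z_{n+1}) \in \gamma_m$ for $m \leq n$ by choice of $z$. This closes the induction.

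There is no real obstacle here: the entire content of the lemma is the preimage inclusion $\gamma_{n+1} \subseteq f(\gamma_n)$ already secured in Lemma \ref{lem3}, iterated backwards $n$ times. The only thing worth flagging is that one must pull back along $\gamma_n, \gamma_{n-1}, \ldots, \gamma_0$ rather than trying to push forward from $\gamma_0$, since forward images of $\gamma_n$ under $f$ need not land on $\gamma_{n+1}$; but reading (\ref{g3}) in the direction $f^{-1}(\gamma_{n+1}) \cap \gamma_n \neq \emptyset$ makes the reverse induction immediate.
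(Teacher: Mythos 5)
Your proof is correct and is exactly the argument the paper has in mind: the paper's own proof simply states that Lemma \ref{lem4} ``is easily proved using induction and (\ref{g3}),'' and your write-up fills in precisely that induction, pulling back along $\gamma_{n+1}\subseteq f(\gamma_n)$ one step at a time.
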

\begin{proof}
This is easily proved using induction and (\ref{g3}).
%This is obviously true for $n=0$ so assume that the lemma is true for some
%$n \geq 0$ and take $v \in \gamma_{n+1}$. By (\ref{g3}) there exists
%$w \in \gamma_n $ with $f(w) = v$, and the induction hypothesis gives
%$z_n \in \gamma_0$ satisfying $f^n(z_n) = w$ and (\ref{14}). Obviously
%we have $f^{n+1}(z_n) = v \in \gamma_{n+1}$ and the induction is complete.
%This proves Lemma \ref{lem4}.
\end{proof}

Now take a sequence of points $z_n \in \gamma_0$ satisfying (\ref{14}).
We may assume that $(z_n)$ converges to $\hat z \in \gamma_0$, and
we have, by (\ref{14}),
\begin{equation}
f^m (\hat z) = \lim_{n\to \infty} f^m (z_n) \in \gamma_m 
\quad \hbox{ for each $m \geq 0$}.
\label{15}
\end{equation}
Using (\ref{12}) we get $\hat z \in I(f)$ and hence
$I(f)$ is non-empty. This proves the first assertion of Theorem \ref{thm1}.

The second assertion will be established by modifying the
method of Rippon and Stallard
\cite{ripstall1}, so assume that $f$ is $K$-quasiregular in
$\R^N$ and take $\hat z$ satisfying (\ref{15}).
As before let $A_n = G_n^*$ be the component of
$\overline{\R^N}\setminus G_n$ containing $\infty$,
and let $L_n$ be the component of $f^{-n} (A_n) $
containing $\hat z$, which is well-defined since $
f^n ( \hat z)  \in \gamma_n$ and
$\gamma_n = \partial A_n$ by definition.

\begin{lem}\label{lem5}
$L_n$ is closed and unbounded.
\end{lem}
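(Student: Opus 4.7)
The plan is to derive closedness from a general topological fact about components of closed sets, and to derive unboundedness by a direct application of Lemma \ref{lem11} to the map $f^n$ and the continuum $A_n$.

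For closedness I would first observe that $A_n$ is closed in $\overline{\R^N}$: it is a connected component of the closed set $\overline{\R^N}\setminus G_n$, and any component of a closed subset of a Hausdorff space is itself closed, since the closure of a connected set is connected. Consequently $A_n \cap \R^N$ is closed in $\R^N$, and then $f^{-n}(A_n) = (f^n)^{-1}(A_n\cap \R^N)$ is closed in $\R^N$ by continuity of $f^n$. Applying the same general fact once more, the component $L_n$ of $\hat z$ in this closed set is itself closed.

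For unboundedness I would simply verify the hypotheses of Lemma \ref{lem11} with $g = f^n$ and $E = A_n$, and read off the conclusion. The set $A_n$ is a continuum in $\overline{\R^N}$ containing $\infty$ by Lemma \ref{lem3}(i) together with the closedness just noted. The map $f^n\colon\R^N\to\R^N$ is continuous, and it is open because $f$ is $K$-quasiregular on all of $\R^N$ (the standing assumption for this half of Theorem \ref{thm1}) and non-constant by (\ref{2}); the open mapping theorem for non-constant quasiregular maps, combined with the fact that compositions of open maps are open, then gives openness of each iterate. Lemma \ref{lem11} thus asserts that $f^{-n}(A_n)$ has no bounded component, so in particular $L_n$ is unbounded.

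The only point requiring mild care is the convention that $f^{-n}(A_n)$ means $\{x\in\R^N : f^n(x)\in A_n\}$, which coincides with $\{x\in\R^N : f^n(x)\in A_n\cap\R^N\}$ since $f^n$ takes only finite values; this is precisely the form to which Lemma \ref{lem11} applies. Given this, the argument is an essentially immediate application of the machinery already in place, and I do not anticipate any substantive obstacle beyond the verification above.
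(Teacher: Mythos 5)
Your proof is correct and follows essentially the same route as the paper, which simply notes that $L_n$ is closed because $A_n$ is closed and is unbounded by Lemma \ref{lem11}; your verification of the hypotheses of Lemma \ref{lem11} (continuity and openness of the non-constant quasiregular iterate $f^n$, and $A_n$ being a continuum containing $\infty$ as a component of the compact set $\overline{\R^N}\setminus G_n$) is exactly the implicit content of that one-line argument.
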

\begin{proof} $L_n$ is closed since $A_n$ is closed, and 
$L_n$ is unbounded by Lemma \ref{lem11}.
%This proves Lemma 5.
\end{proof}

\begin{lem}\label{lem6}
We have $L_{n+1} \subseteq L_n$ for $n=0, 1, \ldots $.
\end{lem}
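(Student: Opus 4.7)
My plan is to prove the inclusion pointwise: I will take an arbitrary $x \in L_{n+1}$ and show that $f^n(x) \in A_n$, i.e.\ that $x \in f^{-n}(A_n)$. Once this is done, the conclusion is immediate: $L_{n+1}$ is connected (it is a component of $f^{-(n+1)}(A_{n+1})$), contains $\hat z$, and is contained in $f^{-n}(A_n)$, so since $L_n$ is by definition the component of $f^{-n}(A_n)$ through $\hat z$, we must have $L_{n+1} \subseteq L_n$.

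To carry this out, set $y = f^n(x)$, so that $f(y) = f^{n+1}(x) \in A_{n+1}$. By Lemma \ref{lem3}(i), $\R^N$ decomposes as the disjoint union of $G_n$, $\overline{B(0, R)}$ and $A_n \setminus \{\infty\}$, so I need to exclude both $y \in G_n$ and $y \in \overline{B(0, R)}$. The first exclusion is free: the second assertion of (\ref{g3}) says $f(G_n) \cap A_{n+1} = \emptyset$, which together with $f(y) \in A_{n+1}$ rules out $y \in G_n$.

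The main step will be ruling out $y \in \overline{B(0, R)}$, and this is where the quantitative choice of $\rho$ in (\ref{rhodef2}) enters. Since $f$ is $K$-quasiregular on all of $\R^N$, it is an open mapping, and therefore $|f|$ cannot attain a local maximum in the interior of $B(0, R)$; hence $|f(y)| \leq M(R, f)$ for every $y \in \overline{B(0, R)}$. Combining this with (\ref{rhodef2}) yields $|f(y)| < \rho$. On the other hand, (\ref{12}) forces every point of $A_{n+1} \setminus \{\infty\}$ to have modulus at least $2^{n+2}\rho$, which is strictly greater than $\rho$; this is incompatible with $f(y) \in A_{n+1}$. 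The contradiction forces $y \in A_n$, as required.

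I expect the only real hurdle to be this second exclusion, which requires combining the openness of quasiregular maps (and hence the full-$\R^N$ hypothesis used in the second half of Theorem \ref{thm1}) with the explicit size gap between $M(R, f)$ and the annular shell containing $A_{n+1}$; everything else is routine bookkeeping with Lemma \ref{lem3}.
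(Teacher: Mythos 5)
Your proof is correct and follows essentially the same route as the paper: you show pointwise that $f^{n+1}(x)\in A_{n+1}$ forces $f^n(x)\in A_n$ by ruling out $f^n(x)\in G_n$ via the second part of (\ref{g3}) and $f^n(x)\in\overline{B(0,R)}$ via the size estimates, and then pass from the pointwise inclusion $f^{-(n+1)}(A_{n+1})\subseteq f^{-n}(A_n)$ to $L_{n+1}\subseteq L_n$ by connectedness of the component containing $\hat z$. The only cosmetic difference is that you invoke (\ref{12}) where the paper cites (\ref{g2}) together with (\ref{rhodef2}); both give the needed contradiction.
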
 
\begin{proof} Suppose that $f^{n+1}(z') \in A_{n+1}$ but
$f^n(z') \not \in A_n$. Thus either $|f^n(z')| \leq R$ or
$f^n(z') \in G_n$, from which we obtain 
$f^{n+1} (z') \not \in A_{n+1}$, in the first case 
from (\ref{rhodef2}) and (\ref{g2}) and in the second case from
(\ref{g3}), and this is a contradiction.
Hence if $z' \in L_{n+1} $ then $z'$ lies in a component of
$f^{-n-1} (A_{n+1})$ which contains $\hat z$, and
this component in turn lies in a component
of $f^{-n} (A_n)$. Hence we get
$z' \in L_n$. 
%This proves Lemma \ref{lem5}.
\end{proof}

We may now write
$$
K_n = L_n \cup \{ \infty \} , \quad
\{ \hat z, \infty \} \subseteq K_{n+1} \subseteq K_n, \quad 
\{ \hat z, \infty \} \subseteq K = \bigcap_{n=0}^\infty K_n .
$$
Since $K_n$ is compact and connected so is
$K$ \cite[Theorem 5.3, p.81]{Newman}. 
Let $\Gamma$ be the component of $K \setminus \{ \infty \}$ which
contains $\hat z$. Then $\Gamma $ is unbounded by Lemma \ref{lemA}.
Now for $w \in \Gamma$ we have $w \in L_n$ and so 
$f^n (w) \in A_n = G_n^*$, so that $w \in I(f)$ by (\ref{g2}).
This completes the proof of Theorem \ref{thm1}.

We do not know whether the second conclusion of Theorem \ref{thm1} holds if
$f$ is only quasiregular on the set $B_R$ in (\ref{4}), but this
seems unlikely. The difficulty is
that for large $n$ we cannot control the behaviour of $f^n$ near
$S(0, R)$ and so the component $L_n$ in Lemma \ref{lem5}
may in principle be bounded. 

\section{A quasiregular mapping $f$ for which
$\overline{I(f)}$ has a bounded component}\label{exa}

To show that there exists a quasiregular mapping $f: \C \to \C$ such that
the closure of the escaping set $I(f)$ has a bounded component, we begin by
constructing a quasiconformal map $g$ with the following properties. 
For each $z$ in
the punctured disc $A: = \{ z \in \C : 0 < |z| < 1 \}$ the iterates
$g^n$ satisfy $\lim_{n \to \infty } | g^n(z)| = 1$, and 
we have $\lim_{n \to \infty} g^n(1/2) = 1$. On the other hand 
there exist annuli $A_n \subseteq A$ such that $g$ maps $A_n$ onto
$A_{n+1}$, but with sufficient rotation that for each
$z \in A_n$
infinitely many of the forward images $g^k(z)$ lie away from $1$.
A map $h$ is then obtained from $g$ by  
conjugation with a M\"obius map $L$ which sends $1$ to
$\infty$, and finally $h$ is interpolated on a sector to ensure that the
resulting function has an essential singularity at infinity.

We will use the fact that if $p$ is quasiregular on a domain $D \subseteq \C$
and
$$
p_z = \frac{\partial p}{\partial z} = \frac12
\left( \frac{\partial p}{\partial x} - i
\frac{\partial p}{\partial y} \right)
$$
is bounded below in modulus on $D$, and if $q$ is continuous and
such that the partial derivatives
$q_x, q_y$ are sufficiently small on $D$, then $p + q$ is quasiregular
on $D$. If $0 \notin D \cup p(D)$ the same property may be applied locally
to $\log p$ as
a function of $\log z$. 
 
Turning to the detailed construction, we define
$a:[1,2]\to [0, \pi/4] $ by
$$a(r)=\frac{\pi}{4}-\arcsin\left(\frac{\sqrt{2}}{2r}\right).$$
Then
%$$\re \left(r e^{ia(r)}\right)= 1+ \im \left(r e^{ia(r)}\right).$$
%Indeed, 
an application of the sine rule shows that the line segment
$$\re z = 1 + \im z , \quad 1 \leq |z| \leq 2,$$
is parametrized by
$z = r e^{i a(r)} $.

For $c>0$ we define $g:\C\to\C$ as follows.
Let $g(0)=0$ and for $z=re^{it}$
with $r > 0$ and $- \pi \leq t \leq \pi $ set:
\begin{equation*}
g(z)=
\begin{cases} 
\frac{4}{3}r\exp\left(i\left(t+c\left|\sin t \right| \right) \right),$$
& \text{$0<r< \frac12$;} \\[2mm]
\frac{1}{2-r}\exp\left(i\left(t+c\left|\sin t
\right| + c(1-r)^2\left|\sin\left(\frac{\pi}{1-r}\right)\right|
\right) \right),
&\text{$\frac12 \leq r<1$;}\\[2mm]
r \exp\left(i\left(t+c(2-r)
\sin \left(\frac{|t|-a(r)}{\pi-a(r)}\pi\right) 
\right)\right),
&\text{$1\leq r\leq 2$, $a(r)<|t|$;}\\[2mm]
r\exp(it), 
&\text{$1\leq r\leq 2$, $|t|\leq a(r)$;}\\[2mm]
r\exp(it), 
&\text{$r>2$.}\\
\end{cases}
\end{equation*}
Then $g$ is continuous on $\C$. Moreover, 
if $c$ is sufficiently small then $g$ is quasiconformal,
and in particular we choose $c< \pi/4$.
Note that, by the choice of $a(r)$, 
\begin{equation}
g(z)=z \quad \hbox{if} \quad \re z\geq |\im z|+1.
\label{gid}
\end{equation}

For $n\in\N$ we have 
\begin{equation}
g\left(1-\frac{1}{n+1}\right)= 1-\frac{1}{n+2}.
\label{g1/2}
\end{equation}
%and thus $g^k\left(\frac12\right)\to 1$ as $k\to\infty$.
For $n\in\N$, $n\geq 2$, we consider the annulus
$$A_n:=\left\{ z\in \C: 1-\frac{1}{n+1/4}<|z|<1-\frac{1}{n+3/4}
\right\}.$$
Then $g(A_n)=A_{n+1}$.

\begin{lem}\label{re<0}
For each $z\in A_n$ with $\re z>0$
there exists $k\in\N$ with $\re g^k(z)\leq 0$.
\end{lem}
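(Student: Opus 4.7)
The plan is to track the polar coordinates $r_k = |g^k(z)|$ and $t_k=\arg g^k(z)\in (-\pi,\pi]$ of the orbit and argue by contradiction that the assumption $\re g^k(z)>0$, equivalently $t_k\in (-\pi/2,\pi/2)$, for every $k$ cannot persist. Since $g(A_m)=A_{m+1}$ one has $g^k(z)\in A_{n+k}$ for every $k$, so $r_k<1$ and the middle branch of the definition of $g$ applies at every step, yielding the recursion
$$r_{k+1}=\frac{1}{2-r_k},\qquad t_{k+1}=t_k+\Delta_k,$$
where $\Delta_k := c|\sin t_k|+c(1-r_k)^2\left|\sin(\pi/(1-r_k))\right|\geq 0$, and trivially $\Delta_k\leq 2c<\pi/2$. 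Hence $t_k$ is monotonically non-decreasing and $r_k\uparrow 1$. A direct computation from $g^k(z)\in A_{n+k}$ gives $\pi/(1-r_k)\in((n+k+1/4)\pi,(n+k+3/4)\pi)$, so $|\sin(\pi/(1-r_k))|\geq \sqrt{2}/2$ and the second summand $\delta_k:=c(1-r_k)^2|\sin(\pi/(1-r_k))|$ of $\Delta_k$ satisfies $\delta_k\geq C_1/(n+k)^2$ for some positive constant $C_1$.

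Assume for contradiction that $t_k\in (-\pi/2,\pi/2)$ for every $k$; I distinguish two cases. \emph{Case~1: $t_m>0$ for some $m$.} The monotonic sequence $(t_k)_{k\geq m}$ is bounded above by $\pi/2$ and so converges to some $t_\infty\in [t_m,\pi/2]$. Since $r_k\to 1$ kills $\delta_k$, passing to the limit in $\Delta_k=t_{k+1}-t_k\to 0$ forces $\sin t_\infty=0$, contradicting $t_\infty\geq t_m>0$. If $t_0=0$, then $\Delta_0\geq\delta_0>0$ first pushes $t_1>0$, reducing to Case~1 at $m=1$; more generally, any zero in the sequence is followed by a strictly positive term.

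\emph{Case~2: $t_k<0$ for every $k$.} Using the elementary bound $|\sin t|\geq (2/\pi)|t|$ on $[-\pi/2,\pi/2]$ together with the case assumption,
$$|t_{k+1}|=|t_k|-c|\sin t_k|-\delta_k\leq \left(1-\frac{2c}{\pi}\right)|t_k|,$$
so $|t_k|$ decays geometrically. However $\delta_k\geq C_1/(n+k)^2$ decays only polynomially, so for all sufficiently large $k$ one has $\delta_k>|t_k|$, whence $t_{k+1}\geq -|t_k|+\delta_k>0$, contradicting $t_{k+1}<0$.

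Both cases are impossible, so $\re g^k(z)\leq 0$ for some $k$. The main delicate point — and what the specific choice of annular radii $1-1/(n+1/4)$ and $1-1/(n+3/4)$ in the definition of $A_n$ seems designed to ensure — is the positivity of $\delta_k$ together with the uniform lower bound $\delta_k\geq C_1/(n+k)^2$; without this, the exponential-versus-polynomial comparison in Case~2 could not be closed. Everything else is bookkeeping.
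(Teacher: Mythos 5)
Your proof is correct and takes essentially the same route as the paper: track $t_k=\arg g^k(z)$, use the choice of the annuli $A_{n+k}$ to bound the extra rotation term below by a constant times $(n+k)^{-2}$, and in the non-positive case derive the contradiction from geometric decay of $|t_k|$ versus this polynomially decaying kick. The only (harmless) difference is in the positive case, where the paper iterates $\arg g(z)\geq\left(1+\tfrac{2c}{\pi}\right)t$ to push the argument past $\pi/2$, while you use monotonicity and $t_{k+1}-t_k\to 0$ to force $\sin t_\infty=0$; both arguments work.
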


\begin{proof}
Let $z \in A_m$ and suppose first that
$0< t:=\arg z< \pi/2$. Then 
\begin{equation}
\pi > t + \frac{\pi}2 > t + 2c \geq \arg g(z)
\geq 
t+c\sin t
\geq 
t+\frac{2c}{\pi}t
=\left(1+\frac{2c}{\pi}\right)t.
\label{t>0}
\end{equation}
On the other hand if $- \pi/2 <t=\arg z\leq 0$ then
\begin{equation}
\frac{\pi}2 > 
%t + 2c \geq
\arg g(z)\geq t+c|\sin t| + \frac{c\sqrt{2}}{2(m+3/4)^2}
\geq \left(1-\frac{2c}{\pi}\right)t +\frac{c'}{(m+1)^2} > - \frac{\pi}2  ,
\label{t<0}
\end{equation}
where $c':=\frac12 c\sqrt{2}$. In particular, 
(\ref{t>0}) and (\ref{t<0}) both hold with $\arg g(z)$ the
principal argument.

Suppose then that there exists $z\in A_n$ 
with $\re g^k(z) > 0$ for all integers $k \geq 0$, and set
$t_k = \arg g^k(z) \in ( - \pi /2, \pi /2)$. Then $g^k(z) \in A_{n+k}$.
If there exists
$k \geq 0$ with $0 < t_k < \pi /2$ then by repeated application of
(\ref{t>0}) we obtain $k' > k$ with $t_{k'} 
\in \left(\pi/2,\pi\right)$, a contradiction. Hence we must
have $- \pi /2 < t_k \leq 0$ for all $k\geq0$. But then repeated application
of (\ref{t<0}) gives, for large $k$,
$$
t_{k-1} \geq \left(1-\frac{2c}{\pi}\right)^{k-1} t_0 , \quad
t_k \geq \left(1-\frac{2c}{\pi}\right) t_{k-1} + \frac{c'}{(n+k)^2} 
%\geq \left(1-\frac{2c}{\pi}\right)^k + \frac{c'}{(n+k)^2} 
> 0,
$$
again a contradiction. 
%This proves Lemma \ref{re<0}.
\end{proof}

With the M\"obius transformation 
$$L(z)=\frac{1}{1-z}$$
we now consider the map
$h:=L\circ g\circ L^{-1}$.
Then $h$ is a quasiconformal self-map of the plane.
Moreover, (\ref{gid}) gives
$h(z)=z$ if $\re L^{-1}(z)\geq |\im L^{-1}(z)|+1$,
which is equivalent to
$\re z\leq -|\im z|$, and we have
\begin{equation}
L(A_n) \subseteq \{ z \in \C : \re z > 0 \} \quad \hbox{and} \quad
h(L(A_n)) = L(A_{n+1}),
\label{hAn}
\end{equation}
using the fact that $g(A_n) = A_{n+1}$.

It follows from (\ref{g1/2}) that
\begin{equation}
h(n+1) = n+2 \quad \hbox{for} \quad n \in \N, 
\label{hn2}
\end{equation}
%Since $g^k\left(\frac12\right)\to1$ as $k\to\infty$ and
%$L\left(\frac12\right) =2$ we see that $h^k(2)\to
%L(1) = \infty$ as $k\to\infty$
and we deduce at once that $2\in I(h)$.
Next we show that $L(A_n)\cap I(h)=\emptyset$ for every integer $n\geq 2$.
In fact, suppose that $n \geq 2$ and $u\in L(A_n)\cap I(h)$.
Then there exists $j_0\in\N$ such that $|h^j(u)|>1$ for 
$j\geq j_0$. Put $w:=h^{j_0}(u)$ and $m:=n+j_0$.
Then 
$L^{-1}(w)\in A_m$ by (\ref{hAn}), and Lemma \ref{re<0}
gives
%an integer
$k\geq 0$ with $\re g^k\left(L^{-1}(w)\right)\leq 0$.
Since $|L(z)|\leq 1$ for $\re z\leq 0$ we deduce that
$$\left|h^{k+j_0}(u)\right|=\left|h^k(w)\right|
=\left|L\left(g^k\left(L^{-1}(w)\right)\right)\right|
\leq 1,$$
contradicting the choice of~$j_0$. Thus $L(A_n)\cap I(h)=\emptyset$.

Since $A_2$ separates $\frac12$ from $1$ it follows
that $2$ lies in the bounded component of the complement of $L(A_2)$, and we
deduce that the component of $\overline{I(h)}$ containing $2$ is bounded.

To construct a quasiregular map $f:\C\to\C$ with an essential singularity
at $\infty$ for which the closure of $I(f)$ has a bounded component we
put $f(z)=h(z)$ for $\re z\geq -|\im z|$ and
$f(z)=z+ d \exp\left(z^4\right)$ for $\re z\leq -|\im z|-1$, where
$d$ is a small positive constant. 
In the remaining region $\Omega$ we define $f$ by interpolation, using
$$
f(z) = z - d \phi (z),
\quad
%\hbox{where} \quad
\phi (z) = \left( \re z + | \im z | \right) \exp\left(z^4\right) 
\quad \hbox{for} \quad -1 < \re z + | \im z | < 0.
$$
Since $\exp\left(z^4\right)$ tends to $0$ rapidly as $z$ tends to
infinity in $\Omega$, it is then clear that the partial derivatives of 
$\phi$ are bounded on $\Omega$, so that $f$ is quasiregular
on $\Omega$ because $d$ is small.

In particular we have
$f(z) = h(z)$ for $\re z > 0$ and so it follows from
(\ref{hn2}) that $2\in I(f)$, whereas $L(A_n) \cap I(f)$ is again empty
using (\ref{hAn}). Thus the component of $\overline{I(f)}$ 
containing $2$ is bounded.

\section{The quasimeromorphic case}\label{mero}

Let $f$ be $K$-quasimeromorphic in the set $B_R$ defined in (\ref{4}),
with a sequence of poles tending to $\infty$, and set
$R_{-1} = R$. Choose
%sequences 
$x_j, D_j, R_j$ for $ j = 0, 1, 2,
\ldots $ as follows. Each $x_j$ is a pole of $f$, 
and $D_j$ is a bounded component of 
the set
$\{ x \in B_R : R_j < |f(x)| \leq \infty \}$ which contains $x_j$ but
no other pole of $f$, such that $D_j$ is mapped by $f$ onto
%the set
$\{ y \in \overline{\R^N} : R_j < |y| \leq \infty \}$.
% by $f$.
Moreover, by choosing $R_{j+1}$ and $x_{j+1}$ sufficiently large, we may
ensure that
\begin{equation}
|x_{j+1}| > 4 R_j \quad \hbox{and} \quad  
D_{j+1} \subseteq \{ x \in \R^N : 2 R_j < |x| < \infty \}
\quad \hbox{for} \quad  j \geq -1.
\label{m1}
\end{equation}
Since
$|f(x)| = R_j $ for all $ x \in \partial D_j $
we may write, for $j \geq 0$, using (\ref{m1}),
\begin{equation}
C_j = \{ x \in D_j : f(x) \in D_{j+1} \} \subseteq
\overline{C_j} \subseteq D_j .
\label{m2}
\end{equation}
Now set 
\begin{equation}
X_0 = \overline{C_0}, \quad
X_{j+1} = \{ x \in X_j : f^{j+1}(x) \in \overline{C_{j+1}} \}.
\label{m5}
\end{equation}
Evidently $X_0$ is compact. Assuming that $X_j$ is compact,
it then follows that $X_{j+1}$ is the intersection of a compact set
with the closed set $f^{-j-1} ( \overline{C_{j+1}} )$ and so is compact.
Hence the $X_j$ form a nested sequence of compact sets.
We assert that
\begin{equation}
f^j(X_j) = \overline{C_j} .
\label{m6}
\end{equation}
We clearly have $f^j(X_j) \subseteq  \overline{C_j} $ by (\ref{m5}),
and (\ref{m6}) is
obviously true for $j=0$, so assume the assertion for some
$j \geq 0$ and take $w \in \overline{C_{j+1}} $. 
Since $f$ maps $D_j$ onto 
$\{ y \in \overline{\R^N} : R_j < |y| \leq \infty \}$, it follows
from (\ref{m1}) and (\ref{m2}) that there exists
$v \in C_j$ with $f(v) = w$. Hence there exists $x \in X_j$ with
$f^j(x) = v$ and $f^{j+1}(x) = w$, completing the induction.

Again since $f$ maps $D_j$ onto 
$\{ y \in \overline{\R^N} : R_j < |y| \leq \infty \}$, 
we evidently have $C_j \neq \emptyset$ and so $X_j $ is non-empty by 
(\ref{m6}). Hence there exists $x$ lying in the intersection of
the $X_j$, so that
$f^j(x) \in \overline{C_j} $ and
$x \in I(f)$ by (\ref{m1}) and (\ref{m2}).

\section{Proof of Lemma \ref{lem11}}\label{pflem11}

To establish
Lemma \ref{lem11} let $ E$ and $g$ be as in the statement
and assume that $g^{-1}(E)$ is non-empty since otherwise there is nothing to
prove. 
%bounded component $X$. 
Note first that
$g^{-1}(E)$ is a closed subset of $\R^N$ by continuity.
%Thus $X$ is a compact subset of $\R^N$, by Lemma \ref{lemD}.
Thus
$$
F =  g^{-1}(E) \cup \{ \infty \}
$$
is a compact subset of $\overline{\R^N}$. In order to prove Lemma \ref{lem11}
it therefore suffices in view of Lemma \ref{lemA} to show that $F$ is
connected. Suppose that this is not the case.
Then there is a partition of $F$ into non-empty
disjoint relatively closed (and so closed) sets $H_1, H_2$ such that
$\infty \in H_2$.
Let
$
W = \R^N \setminus H_2$.
%$$
Then $W$ is an open subset of $\R^N$, and $g(W \setminus H_1) \cap E =
\emptyset$.
Moreover, $H_1$ is a closed subset of $\overline{\R^N}$ and so compact, and
hence a compact subset of $\R^N$ since $\infty \in H_2$. 
Thus $g(H_1)$ is compact and so a non-empty closed subset
of $E$. 

Now suppose that there exist
$y_n \in E \setminus g(H_1)$ with $y_n \to \widetilde y \not
\in E \setminus g(H_1)$. Since $E$ is compact we 
have $\widetilde y \in E$ and so $\widetilde y \in g(H_1)$. 
Hence there exists $\widetilde x \in H_1 $ with $g( \widetilde x ) =
\widetilde y $ and for large enough $n$ there exists $x_n$ close
to $\widetilde x$ with $g(x_n) = y_n \in E \setminus g(H_1)$. 
But then we must have
$x_n \in H_1$, since $g(W \setminus H_1) \cap E = \emptyset$, and this is a
contradiction. So $E \setminus g(H_1)$ is also closed, but evidently
non-empty since $g( \R^N ) \subseteq \R^N $ and $\infty \in E$, 
which contradicts the hypothesis that $E$ is
connected.

{\footnotesize
}

\end{document}